\documentclass[12pt,a4paper]{amsart}
\usepackage{geometry}
\usepackage{amsmath,amssymb,amsfonts,amscd,amsthm,wasysym,xcolor,enumitem,indentfirst,graphicx,booktabs,mathtools}
\usepackage[bookmarksnumbered,colorlinks,linktocpage,pagebackref,plainpages]{hyperref}
\hypersetup{pdfstartview={FitH}}

\numberwithin{equation}{section}
\newtheorem{theorem}{Theorem}[section]
\newtheorem{corollary}[theorem]{Corollary}
\newtheorem{lemma}[theorem]{Lemma}
\newtheorem{proposition}[theorem]{Proposition}

\theoremstyle{definition}

\newtheorem{definition}{Definition}

\allowdisplaybreaks[4]
\renewcommand{\arraystretch}{1.2}

\makeatletter
\@namedef{subjclassname@2020}{\textup{2020} Mathematics Subject Classification}
\makeatother

\begin{document}
\title[On the uniform flatness of polynomial graphs]{On the uniform flatness of polynomial graphs}

\author[Xieping Wang]{Xieping Wang$^1$}
\address{CAS Wu Wen-Tsun Key Laboratory of Mathematics and School of Mathematical Sciences, University of Science and Technology of China, Hefei 230026, Anhui, People's Republic of China}
\email{xpwang008@ustc.edu.cn}

\thanks{$^1$ Partially supported by the NSFC (Grant No. 12371083) and the Fundamental Research Funds for the Central Universities (Grant Nos. WK0010000099 and WK3470000030).}

\author[Li Zhang]{Li Zhang$^2$}
\address{School of Mathematics and Physics, Anhui Jianzhu University, Hefei 230601, Anhui, People's Republic of China}
\email{zhang12@mail.ustc.edu.cn}

\thanks{$^2$ Partially supported by the NSF of Anhui Province (Grant No. 2108085QA04).}

\subjclass[2020]{30H20, 32A36, 32E30, 32B15}
\keywords{Bargmann--Fock space, Bergman space, interpolation, uniformly flat hypersurfaces}

\begin{abstract}
This paper grew out of an effort to understand the so-called uniform flatness, an important concept in the Bargmann--Fock interpolation theory developed by Varolin et al. We show that the graph of every polynomial in one complex variable is a uniformly flat algebraic curve in $\mathbb C^2$, with vanishing upper density with respect to the Gaussian weight. Combined with a result of Pingali and Varolin, this implies that such graphs are interpolating for the Bargmann--Fock space on $\mathbb C^2$.
\end{abstract}
\maketitle

\section{Introduction}
A fundamental topic in complex analysis is the interpolation problem of holomorphic functions in weighted Bergman spaces on complex manifolds. To formulate this problem in a simple but typical case, we consider the Bargmann--Fock space on $\mathbb C^n$, that is the space of all entire functions $F$ on $\mathbb C^n$ satisfying
   $$
   \int_{\mathbb C^n} |F|^2\,e^{-|\,\cdot\,|^2}<\infty.
   $$
In this case the problem is to characterize those analytic hypersurfaces $W\subset \mathbb C^n$ with the interpolation property that for every holomorphic function $f$ on $W$ with $\int_W |f|^2\,e^{-|\,\cdot\,|^2}<\infty$, there is an entire function $F$ on $\mathbb C^n$ that extends $f$ and satisfies
$\int_{\mathbb C^n} |F|^2\,e^{-|\,\cdot\,|^2}<\infty$, by the geometric properties of $W$. In the case $n=1$ this problem has a very satisfactory solution, culminating in the celebrated Seip--Wallst\'{e}n theorem that a discrete subset $W\subset \mathbb C$ is interpolating if and only if it is uniformly separated and its upper density is less than an exact threshold. For this and other related results, as well as their applications in mathematics itself and beyond, we refer the reader to Seip's monograph \cite{Seip_book04} and the references therein.

In higher dimensions, Varolin and his collaborators have systematically investigated the above problem using the $L^2$ method for the $\bar{\partial}$-operator in a series of papers since 2006, including \cite{OCS-Varolin_MA06, PV_Crelle16} (see also \cite{Ohsawa_IV, Ohsawa_V} and \cite{Bern-Ortega_Crelle95} for previous related work in this direction). As it turns out, the interpolation problem becomes subtle in the higher-dimensional case and, despite considerable effort, is still far from well-understood. Nevertheless, Varolin et al. succeeded in introducing an important concept of {\it uniform flatness} for $W$ (to be recalled in Section \ref{sect: Uniform-flatness}), which together with a natural density condition on $W$ constitutes a sufficient condition for $W$ to have the interpolation property described above, and is even conjectured to be necessary when further assuming that $W$ is algebraic; see \cite{OCS-Varolin_MA06, PV_Crelle16, PV_Nagoya21, MPV_Nagoya23} for details.

The purpose of this paper is to prove the following

\begin{theorem}\label{thm:Uniform-flatness}
For every polynomial $P$ on $\mathbb C$ its graph
   $$
   \mathcal{C}=\big\{(z,\, \zeta)\in \mathbb C^2 \!: \zeta=P(z) \big\}
   $$
is a uniformly flat algebraic curve in $\mathbb C^2$, with vanishing upper density with respect to the Gaussian weight $|\cdot|^2$.
\end{theorem}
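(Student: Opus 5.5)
We first fix the definitions, following Pingali--Varolin. A smooth hypersurface $W=\{T=0\}$ is \emph{uniformly flat} if there is $\varepsilon>0$ such that the tubular neighbourhood $\{p: d(p,W)<\varepsilon\}$ is embedded. Equivalently, for every $x\in W$ the intersection $W\cap B(x,\varepsilon)$ consists of a single connected component, namely the component through $x$, and that component is a graph of uniformly bounded slope over its tangent plane at $x$. The upper density with respect to $|\cdot|^2$ is
$$D^+(W)=\limsup_{r\to\infty}\sup_{x\in W}\frac{1}{\pi r^2}\int_{B(x,r)}\tfrac{\sqrt{-1}}{2\pi}\partial\bar\partial\log|T|^2\wedge\omega^{n-1},$$
computed with a suitable defining function $T$; here we take $T(z,\zeta)=\zeta-P(z)$. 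We assume $d=\deg P\ge 2$, since lines are trivially flat with density $0$.

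\textbf{Density.} By Poincar\'e--Lelong, $\frac{\sqrt{-1}}{2\pi}\partial\bar\partial\log|T|^2=[\mathcal C]$. The quantity to bound is therefore $\mathrm{Area}(\mathcal C\cap B(x,r))/(\pi r^2)$. The plan has three steps.

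\begin{enumerate}
\item The projection $(z,\zeta)\mapsto z$ restricted to $\mathcal C\cap B(x,r)$ lands in $\{z:|z-z_0|<r,\ |P(z)-P(z_0)|<2r\}$.
\item The area equals $\int (1+|P'|^2)\,dA$ over that set.
\item For $|z_0|$ large, $|P'|\approx d|a||z|^{d-1}$, and the set in step 1 lies in a region of width about $r/|P'(z_0)|$.
\end{enumerate}

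A cleaner route is Wirtinger: the area of $\mathcal C\cap B$ equals the sum of the areas of the projections to the $z$- and $\zeta$-axes, counted with multiplicity. The $z$-projection is injective and has area at most $\pi r^2$. The $\zeta$-projection is at most $d$-to-$1$ onto a disc of radius $r$, so it has area at most $d\pi r^2$. This only gives $D^+\le d+1$, not $0$.

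To get $0$, the key point is that the density must use the normalization appropriate to Gaussian weights, so the quantity actually compared is $\mathrm{Area}(\mathcal C\cap B(x,r))/r^{2}$ in the limit with $r$ fixed and then the appropriate rescaling. Following PV, $D^+(W)=\limsup_{r\to\infty}\sup_x \frac{1}{r^2}\int_{B(x,r)}[W]\wedge\omega$, and with the Gaussian weight the relevant threshold compares to $r^{2n}$ volume. I would verify the precise normalization in PV and expect that $\mathrm{Area}=O(r^2)$ with the $r^{-2n}=r^{-4}$ normalization yields $0$. So the Wirtinger bound $(d+1)\pi r^2$, uniform in $x$, gives vanishing density.

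\textbf{Uniform flatness.} We need a uniform $\varepsilon$. The unit normal is $\nu=(-P'(z),1)/\sqrt{1+|P'|^2}$. The second fundamental form has size comparable to $|P''|/(1+|P'|^2)^{3/2}$, and this is bounded since $|P''|\lesssim 1+|P'|^{(d-2)/(d-1)}$ for large $|z|$. Bounded curvature gives local graphicality on uniform balls.

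The main obstacle is global: two far-apart pieces of $\mathcal C$ must not come within distance $\varepsilon$. Suppose $(z_1,P(z_1))$ and $(z_2,P(z_2))$ are close. Then $|z_1-z_2|<\varepsilon$ and $|P(z_1)-P(z_2)|<\varepsilon$. Since $z\mapsto(z,P(z))$ is a graph, the two points lie on the same sheet, and the piece of $\mathcal C$ over the segment $[z_1,z_2]$ joins them. Its length is
$$\int|{(1,P'(z))}|\,|dz| \lesssim \varepsilon\,(1+\max|P'|).$$
This need not be small. I would handle it by cases.

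\begin{itemize}
\item \emph{Bounded $|z|$:} compactness settles it.
\item \emph{Large $|z|$:} here $P'$ is almost constant on $\varepsilon$-discs relative to its size, because $|P''|/|P'|\sim (d-1)/|z|\to 0$. So $P(z_2)-P(z_1)\approx P'(z_1)(z_2-z_1)$, which forces $|z_2-z_1|\lesssim \varepsilon/|P'(z_1)|$, and then the connecting arc has length $O(\varepsilon)$.
\end{itemize}

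This chord-arc estimate yields the embedded tubular neighbourhood. Uniform flatness follows, and combined with Pingali--Varolin, so does interpolation.
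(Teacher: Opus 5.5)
Your outline works, and in both halves it takes a different route from the paper. Two things need fixing. The density half is written with the wrong definition, and the flatness half asserts, rather than proves, exactly the step the paper regards as non-trivial.

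\emph{Density.} Both formulas you write, $\frac{1}{\pi r^2}\int_{B(x,r)}[\mathcal C]\wedge\omega$ and $\frac{1}{r^2}\int_{B(x,r)}[\mathcal C]\wedge\omega$, are bounded below by a positive constant for centres $x\in\mathcal C$ (Lelong monotonicity). With either of them the density could never vanish.

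In the definition actually used, $\Upsilon^{\mathcal C}_r$ is the \emph{average} of $[\mathcal C]$ over $B(z,r)$, so one divides by $\mathrm{Vol}(B(0,r))=\pi^2r^4/2$. One then asks for the form inequality $\frac{\sqrt{-1}}{2\pi}\partial\bar\partial\varphi_r-\frac1\alpha\Upsilon^{\mathcal C}_r\ge0$ for $r\gg1$, where $\frac{\sqrt{-1}}{2\pi}\partial\bar\partial\varphi_r=\frac1\pi\omega$ for $\varphi=|\cdot|^2$.

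Since $\Upsilon^{\mathcal C}_r\ge0$, one has $\Upsilon^{\mathcal C}_r\le(\mathrm{tr}_\omega\Upsilon^{\mathcal C}_r)\,\omega$, with $\mathrm{tr}_\omega\Upsilon^{\mathcal C}_r(z)=\frac{2}{\pi^2r^4}\mathrm{Area}(\mathcal C\cap B(z,r))$. Your Wirtinger bound $\mathrm{Area}(\mathcal C\cap B(z,r))\le(d+1)\pi r^2$ holds for every centre $z\in\mathbb C^2$, as it must. It gives $\Upsilon^{\mathcal C}_r\le\frac{2(d+1)}{\pi r^2}\,\omega$, hence density $0$. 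So your guessed $r^{-4}$ normalisation is the right one, but you have to state it, not expect it.

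The projection bound is an elementary, graph-specific substitute for Proposition~\ref{prop: area-growth}. The paper instead uses Lelong monotonicity, Demailly's formula and the identity projective volume $=$ degree, which gives the sharp constant $\pi d$ and covers all algebraic varieties.

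\emph{Uniform flatness.} The paper keeps the coordinate $z$ and works with $f(z,t)=(z,P(z))+t\nu(z)$. It shows $f$ is injective on each $V_\varepsilon(w)$ by making the fixed-point maps $\varphi_\lambda$ contractions on the convex set $\Delta\bigl(w,\delta/C(1+|w|^{d-1})\bigr)\times\Delta(0,\delta)$. This uses $\|Df(w,0)^{-1}\|=1$, Lemma~\ref{Lem: polynomial}(ii) and the bound on $|P''|/\sqrt{1+|P'|^2}$. It then globalises by Lee's patching argument.

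The decisive input there is \eqref{ineq: ball-comparison}, i.e.\ Lemma~\ref{Lem: polynomial}(i): an ambient $\varepsilon$-ball about $(w,P(w))$ sees $\mathcal C$ only over a disc of radius $\lesssim\varepsilon/(1+|w|^{d-1})$. That is precisely your large-$|z|$ estimate.

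You instead invoke ``bounded second fundamental form $+$ local chord--arc bound $\Rightarrow$ uniform tubular neighbourhood''. The principle is true, but you must prove it. One way:
\begin{enumerate}
\item Let $K$ bound the second fundamental form, and take $p,q\in\mathcal C$ with $|p-q|<\varepsilon_0$. Your lifted segment $\gamma$ has length $\ell\le L|p-q|$, and its tangent lines turn at rate $\lesssim K$.
\item Integrating $q-p=\int_0^\ell\gamma'(s)\,ds$ gives $\mathrm{dist}(q-p,T_p\mathcal C)\lesssim KL^2|p-q|^2$. For $|p-q|\ge\varepsilon_0$, trivially $\mathrm{dist}(q-p,T_p\mathcal C)\le|p-q|^2/\varepsilon_0$. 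Hence $\mathrm{dist}(q-p,T_p\mathcal C)\le|p-q|^2/(2\tau)$ for all $p,q$, for some $\tau>0$.
\item If $y=p+t\nu_p=q+s\nu_q$ with $|t|,|s|<\tau$, expanding $|y-q|^2$ and $|y-p|^2$ gives $0\ge|p-q|^2\bigl(2-(|t|+|s|)/\tau\bigr)$, so $p=q$.
\item After shrinking $\tau$, $Df$ is invertible for $|t|<\tau$, and nearest-point projection gives surjectivity. So $f$ is a diffeomorphism onto $N_\tau(\mathcal C)$.
\end{enumerate}

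Also, the unit normal is $(-\overline{P'(z)},1)/\sqrt{1+|P'(z)|^2}$; without the conjugate it is not Hermitian-orthogonal to $(1,P'(z))$.

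Your route is coordinate-free and isolates the two properties that matter. The paper's is explicit and self-contained.
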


In sharp contrast, the graph of a complex polynomial in two or more variables is not necessarily uniformly flat and a simple counterexample to this is
   $$
   \mathcal{S}=\big\{(z,\, w, \, \zeta)\in \mathbb C^3 \!: \zeta=zw^2 \big\}.
   $$
It should also be mentioned that there are many smooth algebraic curves in $\mathbb C^2$ that are not uniformly flat; see \cite{PV_Nagoya21}.

Combining Theorem \ref{thm:Uniform-flatness} with \cite[Theorem 1]{PV_Crelle16} we immediately obtain the following

\begin{theorem}\label{thm:interpolating}
The graph of every polynomial on $\mathbb C$ is interpolating for the Bargmann--Fock space on $\mathbb C^2$.
\end{theorem}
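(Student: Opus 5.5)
The plan is to deduce the statement directly from Theorem~\ref{thm:Uniform-flatness} by checking the hypotheses of the sufficiency result \cite[Theorem 1]{PV_Crelle16}. That result, in the form needed here, says the following. Let $W$ be a smooth closed hypersurface in $\mathbb C^n$, and let $\varphi$ be a weight with $c\,\omega_0\le dd^c\varphi\le C\,\omega_0$, where $\omega_0$ is the Euclidean Kähler form. If $W$ is uniformly flat and its upper density with respect to $\varphi$ is strictly below the critical threshold, then $W$ is interpolating for the weighted Bergman space $\mathcal{H}(\mathbb C^n,e^{-\varphi})$. Here the norm on $W$ is the $L^2$ norm with respect to the induced area measure, weighted by $e^{-\varphi}$. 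So I take $n=2$, $\varphi=|\cdot|^2$ and $W=\mathcal C$, and verify the hypotheses one at a time.

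\emph{Step 1 (smoothness and closedness).} $\mathcal C$ is the zero set of $T(z,\zeta)=\zeta-P(z)$. Since $\partial T/\partial\zeta\equiv 1$, $dT$ never vanishes, so $\mathcal C$ is a smooth, closed, connected complex curve in $\mathbb C^2$. It is biholomorphic to $\mathbb C$ via $z\mapsto (z,P(z))$, which makes the restriction space $\mathcal{H}(\mathcal C, e^{-|\cdot|^2})$ concrete.

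\emph{Step 2 (weight).} The Gaussian weight has $dd^c|\cdot|^2$ equal to a constant multiple of $\omega_0$. Hence the two-sided curvature bound in \cite{PV_Crelle16} holds trivially, with equal constants.

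\emph{Step 3 (geometric hypotheses).} By Theorem~\ref{thm:Uniform-flatness}, $\mathcal C$ is uniformly flat and its upper density with respect to $|\cdot|^2$ equals $0$. In particular the density is strictly less than the threshold (normalized to $1$ in \cite{PV_Crelle16}), so \cite[Theorem 1]{PV_Crelle16} applies and $\mathcal C$ is interpolating.

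The only point requiring care, and the main potential obstacle, is bookkeeping. The definitions of uniform flatness and of upper density used in Theorem~\ref{thm:Uniform-flatness} (recalled in Section~\ref{sect: Uniform-flatness}) must match those in \cite{PV_Crelle16} exactly. This includes the normalization of $dd^c$ and of the density threshold, and whether density is computed from the current of integration $[\mathcal C]$ averaged over balls of fixed radius. Since the density here is $0$, any discrepancy in normalizing constants is harmless. I would also confirm that the interpolation notion in \cite{PV_Crelle16} coincides with the one described in the introduction, namely $L^2$ extension with respect to area measure on $W$ and Lebesgue measure on $\mathbb C^2$, both weighted by $e^{-|\cdot|^2}$.
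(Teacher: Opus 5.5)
Your proposal is correct and follows the paper's route exactly. The paper obtains this theorem immediately by combining Theorem~\ref{thm:Uniform-flatness} (uniform flatness of $\mathcal C$ and vanishing upper density with respect to $|\cdot|^2$) with \cite[Theorem 1]{PV_Crelle16}, whose density threshold is $1$. The paper also leaves implicit the bookkeeping you flag, namely matching the definitions of uniform flatness, density and the norm on $\mathcal C$ with those of Pingali--Varolin, and since the density is $0$ the normalization issues are indeed harmless.
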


The part of Theorem \ref{thm:Uniform-flatness} concerning the uniform flatness of $\mathcal{C}$ was claimed in \cite[Remark 1.3]{PV_Nagoya21} without proof. To gain a better understanding of the differences between algebraic and analytic hypersurfaces presented in the geometric interpolation theory of holomorphic functions, we provide a detailed  proof for this fact by quantifying the usual proof of the tubular neighborhood theorem in Riemannian geometry (which does not seem to be trivial, as will be seen in Section \ref{sect: Uniform-flatness}). Hopefully this will shed some new light on the conjecture mentioned above.

The other part of Theorem \ref{thm:Uniform-flatness} concerning the upper density of $\mathcal{C}$ is just a special case of the general fact that all algebraic hypersurfaces in $\mathbb C^n$ have vanishing upper density with respect to the Gaussian weight, regardless of their singularities, as we shall prove in Section \ref{sect: density}.

\smallskip
\noindent {\bf Acknowledgements.}
The first-named author would like to thank Prof. Vamsi P. Pingali for patiently explaining his interesting work with Varolin \cite{PV_Crelle16}. Both authors are grateful to the anonymous
referee  for his/her careful reading of the paper and many valuable comments.

\bigskip

\section{Proof of Theorem \ref{thm:Uniform-flatness}: the uniform flatness part}\label{sect: Uniform-flatness}
We begin by recalling the notion of uniform flatness for smooth analytic hypersurfaces in $\mathbb C^n$.

\begin{definition}
A smooth analytic hypersurface $W\subset \mathbb C^n$ is said to be {\it uniformly flat} if there exists a constant $\varepsilon>0$ such that
  $$
  N_{\varepsilon}(W) \coloneqq \big\{ z\in \mathbb C^n \!: \mbox{dist}(z,\, W)<\varepsilon \big\}
  $$
forms a tubular neighborhood of $W$ in $\mathbb C^n$.
\end{definition}

More explicitly, if we write $W$ in the form $W=T^{-1}(0)$, where $T$ is an entire function on $\mathbb C^n$ such that ${\rm d}T$ is nowhere vanishing on $W$\footnote{Such $T$ always exists, since the Picard group of $\mathbb C^n$ is trivial.}, then the uniform flatness of $W$ amounts to the existence of a constant $\varepsilon>0$ such that the map
  $$
  W\times \Delta(0,\, \varepsilon) \to N_{\varepsilon}(W), \quad \, (z,\, t)\mapsto z+t \frac{\overline{{\rm grad}\, T(z)}}{|{\rm grad}\, T(z)|}
  $$
is a diffeomorphism, where $\Delta(0,\, \varepsilon)\subset \mathbb C$ denotes the disc centered at the origin with radius $\varepsilon$ and ${\rm grad}\, T$ the complex gradient of $T$:
   $$
   {\rm grad}\, T=\bigg(\frac{\partial T}{\partial z_1},\ldots, \frac{\partial T}{\partial z_n}\bigg).
   $$
Note that when $n=1$, $W$ is exactly a discrete set in $\mathbb C$, and it is uniformly flat if and only if it satisfies the separation condition
   $$
   \inf\!\big\{|z-w|\!: \, z, w\in W \,\, \mbox{and}\,\, z\neq w\big\}>0.
   $$
In this case, $W$ is usually said to be {\it uniformly separated}.

\smallskip

Next we prove the following simple lemma, which will be used in the proof of Theorem $\ref{thm:Uniform-flatness}$.
\begin{lemma}\label{Lem: polynomial}
  Let $P$ be a polynomial of degree $d\geq 1$ on $\mathbb C$. Then
  \begin{enumerate}[leftmargin=2.0pc, parsep=4pt]
   \item [{\rm(i)}] there exists a constant $C>1$ such that
         $$
         C^{-1}|w|^{d-1} \leq \left|\frac{P(z)-P(w)}{z-w}\right| \leq C|w|^{d-1}
         $$
      holds for all  $|w|>>1$ and $0<|z-w|<1$;

   \item [{\rm(ii)}]
         $$
         \sup\limits_{0<|z-w|<1}\frac{|P'(z)-P'(w)|}{|z-w|+|P(z)-P(w)|}<\infty.
         $$
  \end{enumerate}
\end{lemma}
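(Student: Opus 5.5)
Both parts follow from the Taylor expansion of $P$ about $w$. Write $h=z-w$ with $0<|h|<1$. Since $P$ has degree $d$,
$$
\frac{P(z)-P(w)}{z-w}=\sum_{k=1}^{d}\frac{P^{(k)}(w)}{k!}\,h^{k-1}=P'(w)+\sum_{k=2}^{d}\frac{P^{(k)}(w)}{k!}\,h^{k-1}.
$$
For (i), let $a_d\neq 0$ be the leading coefficient of $P$. Then $P'(w)=d\,a_d w^{d-1}+O(|w|^{d-2})$, and for $k\ge 2$ we have $|P^{(k)}(w)|=O(|w|^{d-k})\le O(|w|^{d-2})$ once $|w|\ge 1$. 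Because $|h|<1$, the whole sum equals $d\,a_d w^{d-1}+O(|w|^{d-2})$, with an $O$-constant depending only on the coefficients of $P$. Hence for $|w|$ large the error is at most $\tfrac12 d|a_d||w|^{d-1}$. This gives (i) with, say, $C=\max\{2/(d|a_d|),\,2d|a_d|\}+1$. The case $d=1$ is trivial, since then the quotient is the constant $a_1$.

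For (ii), apply the same expansion to $P'$, which has degree $d-1$:
$$
|P'(z)-P'(w)|\le |h|\sum_{k=2}^{d}\frac{|P^{(k)}(w)|}{(k-1)!}\,|h|^{k-2}\le C_1|h|\,(1+|w|)^{d-2}.
$$
If $d=1$ the numerator vanishes, and if $d=2$ it is at most $C_1|h|\le C_1(|h|+|P(z)-P(w)|)$. So assume $d\ge 2$ and split into two regimes.

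\begin{itemize}
\item If $|w|\le R$, with $R$ the threshold from (i), the bound above gives numerator $\le C_1(1+R)^{d-2}|h|$, so the ratio is bounded by a constant using only the $|h|$ term in the denominator.
\item If $|w|>R$, part (i) gives $|P(z)-P(w)|\ge C^{-1}|w|^{d-1}|h|$. The numerator is $O(|w|^{d-2}|h|)$, so the ratio is $O(|w|^{-1})$, which is bounded.
\end{itemize}

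Combining the two regimes proves (ii).

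The only real care needed is keeping the constants uniform in $w$. Every constant above depends only on the coefficients of $P$, because $|h|<1$ lets us absorb all higher powers of $h$.
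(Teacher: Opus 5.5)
Your proof is correct and follows the same strategy as the paper. In (i) the leading term $d a_d w^{d-1}$ dominates the difference quotient uniformly for $|z-w|<1$. In (ii) you split into bounded $w$, handled by the $|z-w|$ term in the denominator, and large $w$, handled by (i) together with a $(1+|w|)^{d-2}$ bound on the difference quotient of $P'$.

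The only difference is cosmetic. You expand at $w$, so the main term is $P'(w)$ itself. The paper instead uses $\frac{P(z)-P(w)}{z-w}=\sum_k a_k\sum_j z^jw^{k-1-j}$, which needs a separate lower bound for $\bigl|\sum_j z^jw^{d-1-j}\bigr|$; your expansion avoids that step.
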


\begin{proof} We write $P(z)=\sum_{k=0}^{d}a_kz^k$ with $a_d\neq 0$, and first prove part {\rm(i)}. To this end, an application of the fundamental theorem of calculus and the triangle inequality yields
  $$
  \left|\sum_{k=0}^{d-1} z^k w^{d-1-k} \right| \geq d \left( \max\{|z|,\, |w|\} \right)^{d-1} \left(1-\frac{d-1}{2} \frac{|z-w|} {\max\{|z|,\, |w|\}} \right),
  \quad z,\, w\in\mathbb C\!\setminus\!\{0\}.
  $$
Hence
  $$
  \left|\sum_{k=0}^{d-1} z^k w^{d-1-k} \right| \geq \frac{d}{2}|w|^{d-1}   \qquad \mbox{whenever} \quad \; |w|\geq d  \; \; \; \mbox{and}  \; \; \; |z-w|\leq 1.
  $$
Applying this and the triangle inequality to the identity
  $$
  \frac{P(z)-P(w)}{z-w} = \sum_{k=1}^{d} a_k\bigg(\sum_{j=0}^{k-1} z^j w^{k-1-j}\bigg),
  $$
we then arrive at
   \begin{equation}\label{ineq: growth-est1}
    \begin{split}
     \left|\frac{P(z)-P(w)}{z-w}\right| &\geq |a_d| \left|\sum_{j=0}^{d-1} z^j w^{d-1-j} \right| - \sum_{k=1}^{d-1} |a_k| \sum_{j=0}^{k-1} |z|^j |w|^{k-1-j}\\
     &\geq |a_d| \left|\sum_{j=0}^{d-1} z^j w^{d-1-j} \right| - \sum_{k=1}^{d-1} |a_k| \big( (|w|+1)^k-|w|^k \big) \\
     &\geq \frac{d|a_d|}{4}|w|^{d-1}
    \end{split}
   \end{equation}
whenever $|w|>>1$ and  $|z-w|\leq 1$. On the other hand, it is easy to derive
   \begin{equation}\label{ineq: growth-est2}
    \begin{split}
     \left|\frac{P(z)-P(w)}{z-w}\right|
      &\leq \sum_{k=1}^{d} |a_k| \left|\frac{z^k-w^k}{z-w}\right| \leq \sum_{k=1}^{d} k|a_k| (|w|+1)^{k-1} \\
      &\leq \bigg(2^{d-1}\sum_{k=1}^{d} k|a_k|\bigg) |w|^{d-1}
    \end{split}
   \end{equation}
for all $|w|\geq 1$ and $|z-w|\leq 1$. This together with \eqref{ineq: growth-est1} proves part {\rm(i)}.

\smallskip

  As for part {\rm(ii)}, it suffices to find a uniform upper bound for
    $$
    \frac{|P'(z)-P'(w)|}{|z-w|+|P(z)-P(w)|}=\frac{|P'(z)-P'(w)|/|z-w|}{1+|P(z)-P(w)|/|z-w|}
    $$
when $|w|>>1$ and $0<|z-w|\leq 1$. But this is easy to achieve, in view of \eqref{ineq: growth-est1} and the estimate
   $$
   \left|\frac{P'(z)-P'(w)}{z-w}\right| \leq \bigg(\sum_{k=2}^{d} k(k-1)|a_k|\bigg) (|w|+1)^{d-2}
   \qquad \mbox{with}  \; \; \; |z-w|\leq 1,
   $$
which follows in a similar way to \eqref{ineq: growth-est2}. The proof is complete.
\end{proof}

We are now ready to prove Theorem $\ref{thm:Uniform-flatness}$.

\begin{proof}[Proof of Theorem $\ref{thm:Uniform-flatness}$]
Clearly, $\mathcal{C}$ is an algebraic curve in $\mathbb C^2$. As an immediate consequence of Proposition \ref{prop: density-vanishing} below, we also know that $\mathcal{C}$ has vanishing upper density with respect to $|\cdot|^2$.

Now it remains to show that $\mathcal{C}$ is uniformly flat. This will be done by quantifying the usual proof of the tubular neighborhood theorem in Riemannian geometry, for which we refer for example to \cite[pp. 133--135]{Lee_RGbook} (and the reader should take a closer look at it for a better understanding of the argument that follows).

 To begin with, consider the map $f\!:\mathbb C^2\to \mathbb C^2$ given by
   $$
   f(z,\, t)=\big(z,\, P(z)\big)+t\frac{\big(-\overline{P'(z)},\, 1\big)}{\displaystyle\sqrt{1+|P'(z)|^2}}.
   $$
As indicated by the argument on pp. 133--134 of Lee's book \cite{Lee_RGbook}, what we need to show here is that there exists a constant $\varepsilon>0$ such that $f$ is injective on
  \begin{equation*}\label{defn: tub-nbhd}
    V_{\varepsilon}(w)\coloneqq \Big\{z\in \mathbb C\!: |z-w|^2+|P(z)-P(w)|^2<\varepsilon^2\Big\}\times\Delta(0,\, \varepsilon)
  \end{equation*}
for all $w\in\mathbb C$. Once this is proved, it follows easily that $f$ is a diffeomorphism from $\mathbb C\times\Delta(0,\,\varepsilon/2)$ to its image, giving the desired result (compare with \cite[p. 135]{Lee_RGbook}).

In order to show the existence of a constant $\varepsilon>0$ with the above property, it is useful to observe first that Lemma \ref{Lem: polynomial} {\rm(i)} implies that there exists a constant $C>1$ such that
 \begin{equation}\label{ineq: ball-comparison}
    V_{\delta/ C^2}(w) \subseteq \Delta\bigg(w,\, \frac{\delta}{C(1+|w|^{d-1})}\bigg)\times\Delta(0,\, \delta) \subseteq V_{\delta}(w)
 \end{equation}
for all $w\in\mathbb C$ and $0<\delta<1$, where $d={\rm deg}\, P$ (here and henceforward we assume $d\geq1$, the desired result being trivial when $d=0$).
Next, a straightforward calculation shows that the (complexification of the real) Jacobian matrix of $f$ takes the following block form:
  \begin{equation}\label{eq:tub-nbhd_Jacobian}\renewcommand{\arraystretch}{1.5}
    Df(z,\, t)=\begin{pmatrix}
        A(z)+t B(z)         &      t C(z)  \\
        \overline{t C(z)}   &   \overline{A(z)+t B(z)} \\
   \end{pmatrix},
   \end{equation}
where
   \begin{equation*}\label{eq:tub-nbhd_Jacobian1}\renewcommand{\arraystretch}{2.2}
     A(z)=\begin{pmatrix}
         1      & \quad  \displaystyle\frac{-\overline{P'(z)}}{\sqrt{1+|P'(z)|^2}} \\
        P'(z)   & \quad  \displaystyle{\frac{1}{\sqrt{1+|P'(z)|^2}}} \\
   \end{pmatrix},
   \qquad \quad     \renewcommand{\arraystretch}{1.3}
     B(z)=\frac{\overline{P'(z)} P''(z)}{2(1+|P'(z)|^2)^{3/2}}
     \begin{pmatrix}
        \overline{P'(z)}  & \; 0 \\
              -1          & \; 0 \\
     \end{pmatrix}
   \end{equation*}
and
   \begin{equation*}\label{eq:tub-nbhd_Jacobian2}\renewcommand{\arraystretch}{1.3}
      C(z)=-\frac{\overline{P''(z)}}{2(1+|P'(z)|^2)^{3/2}}
      \begin{pmatrix}
        2+|P'(z)|^2  & \; 0 \\
          P'(z)      & \; 0 \\
      \end{pmatrix}.
   \end{equation*}
It follows in particular that $Df(z,\, 0)$ is always invertible and its inverse $Df(z,\, 0)^{-1}$ has operator norm one, as can be easily checked. Now for each fixed $w\in\mathbb C$, consider the family of maps
  $$
  \varphi_{\lambda}\!:\mathbb C^2\to \mathbb C^2, \quad  (z,\, t)\mapsto (z,\, t)+\big(\lambda-f(z,\, t)\big)Df(w,\, 0)^{-1}
  $$
parameterized by $\lambda\in\mathbb C^2$. Note that a point $(z,\, t)\in \mathbb C^2$ satisfies $f(z,\, t)=\lambda$ if and only if it is a fixed point of $\varphi_{\lambda}$. Using this and the fact that
  $$
  D\varphi_{\lambda} (z,\, t)={\rm Id}_{\mathbb C^4}-Df(z,\, t)Df(w,\, 0)^{-1},
  $$
and also taking \eqref{ineq: ball-comparison} into account, we conclude that all the  $\varphi_{\lambda}$'s are contractive on
$\Delta \big(w,\, \delta/C(1+|w|^{d-1})\big)\times\Delta(0,\, \delta)$ (and consequently $f$ is injective there, in particular on $V_{\varepsilon}(w)$ with $\varepsilon=\delta/ C^2$), provided $\delta>0$ is chosen so small that
  \begin{equation}\label{ineq: oper-norm_control}
    \sup_{(z,\, t)\in V_{\delta}(w)}\big\|Df(z,\, t)-Df(w,\, 0)\big\|\leq \frac12.\footnote{As usual, $\|M\|$ here denotes the operator norm of a matrix $M\in \mathbb C^{n\times n}$; and $\|M\|_2$ denotes its Euclidean norm, as we shall see next in \eqref{ineq: oper-norm_est1} below.}
  \end{equation}
In this way we reduce the problem to proving that there does exist a constant $\delta>0$ such that \eqref{ineq: oper-norm_control} holds for all $w\in\mathbb C$.

To seek such a constant $\delta$, we use \eqref{eq:tub-nbhd_Jacobian} to derive the following estimate:
  \begin{equation}\label{ineq: oper-norm_est1}
     \begin{split}
      \big\|Df(z,\, t)-Df(z,\, 0)\big\|&=|t|
      \left\|\renewcommand{\arraystretch}{1.25}\begin{pmatrix}
                B(z)          &        C(z)  \\
           \overline{C(z)}    &   \overline{B(z)} \\
        \end{pmatrix}\right\|
      \leq \sqrt2\, \Big(\|B(z)\|^2_2+\|C(z)\|^2_2\Big)^{\frac12} |t| \\
      & \leq \frac{2|P''(z)|}{(1+|P'(z)|^2)^{\frac12}}|t|\leq {\rm const.}\,|t|
     \end{split}
  \end{equation}
for all $z,\, t\in\mathbb C$, where the involved constant depends only on the polynomial $P$. Next we deal with the term
  $$
  \big\|Df(z,\, 0)-Df(w,\, 0)\big\|=\|A(z)-A(w)\|.
  $$
Since each entry of the matrix $A$ is a Lipschitz function of $P'$, it follows that
  $$
  \|A(z)-A(w)\| \leq \|A(z)-A(w)\|_2 \leq {\rm const.}\,|P'(z)-P'(w)|
  $$
for all $z,\, w\in\mathbb C$. In view of this and \eqref{ineq: oper-norm_est1}, as well as the triangle inequality, what now remains is to show
  $$
  \lim\limits_{\delta \to 0^+}\sup\Big\{|P'(z)-P'(w)|\!: z,\, w\in\mathbb C\; \;\mbox{and}\;\; |z-w|+|P(z)-P(w)|<\delta\Big\}=0.
  $$
But this follows immediately from Lemma \ref{Lem: polynomial} {\rm(ii)}.  The proof is complete.
\end{proof}

\medskip

\section{Density of affine algebraic hypersurfaces}\label{sect: density}
In this section we investigate the density property of algebraic hypersurfaces in $\mathbb C^n$ by using their own metric properties. To this end, let
  $$
  \omega=\frac{\sqrt{-1}}{2}\partial\overline{\partial}|z|^2
  $$
denote the standard K\"{a}hler form on $\mathbb C^n$. Given $z\in\mathbb C^n$ and $r>0$, we denote by $B(z, \,r)$ the Euclidean ball of center $z$ and radius $r$. Then we have the following

\begin{proposition}\label{prop: area-growth}
Let $A$ be a pure $p$-dimensional algebraic variety in $\mathbb C^n$, and let $[A]$ denote the current of integration over $A$. Then
\begin{equation}\label{ineq: area-growth}
  \frac{1}{r^{2p}} \int_{B(z, \,r)} [A]\wedge \omega^p \leq \pi^p {\rm deg}\,A
\end{equation}
for all $z\in\mathbb C^n$ and $r>0$, where ${\rm deg}\,A$ denotes the degree of $A$.
\end{proposition}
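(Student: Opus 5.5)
The approach is to combine Lelong monotonicity with the projective closure. Fix $z\in\mathbb C^n$. For $r>0$ set
$$
\nu(z,r)=\frac{1}{r^{2p}}\int_{B(z,r)}[A]\wedge\omega^p .
$$
This is, up to the factor $\pi^p/p!$ coming from the normalization of $\omega^p$, the ratio of the $2p$-dimensional volume of $A\cap B(z,r)$ to that of a $p$-dimensional ball of radius $r$. I first translate $z$ to the origin; this changes neither the degree nor the integrand, since $\omega$ is translation invariant.

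By Lelong's theorem (Wirtinger plus the monotonicity formula for the positive closed current $[A]$), the function $r\mapsto \nu(0,r)$ is nondecreasing on $(0,\infty)$. It therefore suffices to show that
$$
\lim_{r\to\infty}\nu(0,r)\le \pi^p\deg A .
$$
To prove this I would write
$$
\frac{1}{r^{2p}}\,\omega^p=\Big(\frac{\sqrt{-1}}{2}\partial\bar\partial\,\frac{|z|^2}{r^2}\Big)^p ,
$$
and compare with the Fubini--Study type potential $\tfrac12\log(1+|z|^2)$. The cleanest route uses the standard consequence of monotonicity that the limit equals the Lelong number at infinity:
$$
\lim_{r\to\infty}\nu(0,r)=\lim_{r\to\infty}\int_{B(0,r)}[A]\wedge\Big(\tfrac{\sqrt{-1}}{2}\partial\bar\partial\log|z|^2\Big)^p\cdot c_p,
$$
with the normalizing constant $c_p$ fixed by the case where $A$ is a linear subspace.

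The key step is identifying this projective mass. Using the form $\big(\tfrac{\sqrt{-1}}{2}\partial\bar\partial\log(1+|z|^2)\big)^p$ (Fubini--Study on $\mathbb P^n\supset\mathbb C^n$), the integral of $[A]$ against it over all of $\mathbb C^n$ equals the integral over the projective closure $\bar A$. By Wirtinger/Bézout this is $\deg A$ times the Fubini--Study volume of a linear $\mathbb P^p$, which is computed explicitly. The difference between the $\log|z|^2$ and $\log(1+|z|^2)$ potentials is bounded outside a compact set, and a Stokes/comparison argument (Demailly's comparison theorem for Lelong numbers) shows the two limits agree.

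The main obstacle is tracking the normalizing constants precisely so that the final bound is exactly $\pi^p\deg A$. I would calibrate everything on a $p$-plane through $z$, where $\nu(z,r)$ is constant and equal to the claimed value with $\deg A=1$. This shows both the constants and the sharpness. Finally, a general $A$ is treated by decomposing it into irreducible components, each of which is handled by the argument above, and then summing, since degrees add.
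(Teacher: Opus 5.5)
Your proposal is correct and follows essentially the paper's argument: Lelong monotonicity reduces the claim to the limit $r\to\infty$ at $z=0$ (your translation argument is a clean substitute for the paper's ball inclusions), you then compare with the Fubini--Study potential $\log(1+|\cdot|^2)$, and the resulting projective volume of $\overline{A}$ equals $\deg A$. The paper skips your detour through $\log|z|^2$ and Demailly's comparison theorem, which is stated locally at a pole and would need an ``at infinity'' variant; instead it uses the exact Stokes identity $\frac{1}{(\pi r^2)^p}\int_{B(0,r)}[A]\wedge\omega^p=(1+r^{-2})^p\int_{B(0,r)}[A]\wedge\big(\frac{\sqrt{-1}}{2\pi}\partial\overline{\partial}\log(1+|\cdot|^2)\big)^p$, which also fixes the constant without calibrating on planes (incidentally, the factor in your first remark should be $\pi^p$, not $\pi^p/p!$).
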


Two remarks are in order:
\begin{enumerate}[leftmargin=1.6pc, parsep=2pt]
  \item [{\rm(i)}]  The estimate in \eqref{ineq: area-growth} seems to be widely-known, probably going back to Stoll \cite{Stoll_algebraicity} (see also \cite[Section 17.2]{Chirka_CAbook}), who showed that the algebraicity of a pure $p$-dimensional analytic variety $A\subset\mathbb C^n$ can be characterized by the growth condition
        $$
        \sup_{r>0}\frac{1}{r^{2p}} \int_{B(z, \,r)} [A]\wedge \omega^p < \infty
        $$
      for some (and hence all) $z\in\mathbb C^n$.

  \item [{\rm(ii)}] The usual projection technique in the theory of complex analytic varieties (e.g., combining the corollary on p. 226 (and its proof) with Corollary 1 on p. 126 of Chirka's book \cite{Chirka_CAbook}) easily yields the following weaker estimate than \eqref{ineq: area-growth}:
       $$
       \frac{1}{r^{2p}} \int_{B(z, \,r)} [A]\wedge \omega^p \leq \binom{n}{p}\pi^p {\rm deg}\,A.
       $$
       This is actually already sufficient for the proof of Proposition \ref{prop: density-vanishing} below, but the estimate in \eqref{ineq: area-growth} is interesting in its own right because of its sharpness.

\end{enumerate}

\begin{proof}[Proof of Proposition $\ref{prop: area-growth}$]
We include a proof here for completeness. According to a well-known monotonicity result of Lelong, for every $z\in\mathbb C^n$ the function
  $$
  (0,\, \infty)\ni r \mapsto \frac{1}{r^{2p}} \int_{B(z, \,r)} [A]\wedge \omega^p
  $$
is increasing. Thus we need only show that its limit as $r\to\infty$ does not exceed $\pi^p {\rm deg}\,A$.

We show that the limit is in fact exactly $\pi^p {\rm deg}\,A$ for all $z\in\mathbb C^n$. Observe first that the fact that
  $$
  B(0, \,r-|z|)\subseteq B(z, \,r) \subseteq B(0, \,r+|z|)
  $$
reduces the problem to the case $z=0$. Now an application of Demailly's formula (see \cite[Chapter III, Formula 5.5]{Demaillybook}) yields
  $$
  \frac{1}{(\pi r^2)^p} \int_{B(0, \,r)} [A]\wedge \omega^p
  =\bigg(1+\frac{1}{r^2}\bigg)^p \int_{B(0, \,r)} [A]\wedge
   \bigg(\frac{\sqrt{-1}}{2\pi}\partial\overline{\partial}\log(1+|\cdot|^2)\bigg)^p,
  $$
and the right-hand side converges to the integral
  $$
  \int_{\mathbb C^n} [A]\wedge \bigg(\frac{\sqrt{-1}}{2\pi}\partial\overline{\partial}\log(1+|\cdot|^2)\bigg)^p
  $$
as $r\to\infty$. Note that this integral represents nothing but the projective volume of $A$ (or rather the volume of the closure of $A$ in $\mathbb{CP}^n$ --- a pure $p$-dimensional algebraic variety --- w.r.t. the Fubini--Study metric, hence finite), which coincides with the degree ${\rm deg}\,A$ of $A$ itself, in view of a standard result in complex algebraic geometry (cf. \cite[Section 14.7]{Chirka_CAbook}).
Consequently
  $$
  \lim_{r\to\infty} \frac{1}{r^{2p}} \int_{B(0, \,r)} [A]\wedge\omega^p = \pi^p {\rm deg}\,A,
  $$
and we are done.
\end{proof}

Let $W$ be a (possibly singular) analytic hypersurface in $\mathbb C^n$. Following \cite{OCS-Varolin_MA06, PV_Crelle16}, we associate with $W$ a family of $(1,\, 1)$-currents $\{\Upsilon^W_r\}_{r>0}$ on $\mathbb C^n$:
  $$
  \Upsilon^W_r \coloneqq [W]\ast \frac{\mathbf{1}_{B(0, \,r)}}{\mathrm{Vol}(B(0, \,r))},
  $$
where $\mathbf{1}_E$ denotes the characteristic function of a set $E\subseteq\mathbb C^n$ and $\ast$ is convolution. Note that the coefficients of each $\Upsilon^W_r$ are locally bounded functions on $\mathbb C^n$; in fact for each $z\in\mathbb C^n$, $\Upsilon^W_r(z)$ is the average of the current of integration $[W]$ over $B(z, \,r)$.

Similarly as above, given a plurisubharmonic function $\varphi$ on $\mathbb C^n$ we set
  $$
  \varphi_r \coloneqq \varphi \ast \frac{\mathbf{1}_{B(0, \,r)}}{\mathrm{Vol}(B(0, \,r))}
  $$
for $r>0$. With these preparations we recall the following

\begin{definition}[see \cite{OCS-Varolin_MA06, PV_Crelle16}]
Suppose $W$ is an analytic hypersurface in $\mathbb C^n$, and $\varphi$ is a plurisubharmonic function on $\mathbb C^n$. The {\it upper density} of $W$ with respect to $\varphi$ is defined to be
  $$
  D_{\varphi}(W) \coloneqq \inf\!\Big\{\alpha\in (0,\, \infty]\!:
    \frac{\sqrt{-1}}{2\pi}\partial\overline{\partial}\varphi_r
    -\frac{1}{\alpha} \Upsilon^W_r \geq 0 \quad \mbox{for all}\;\; r>>1\Big\}.
  $$
\end{definition}

We are now in a position to show that the upper density of algebraic hypersurfaces in $\mathbb C^n$ with respect to the Gaussian weight $|\cdot|^2$ is always zero. In fact, we can prove the following slightly more general result (which was essentially known to Pingali and Varolin; see \cite[Proposition 2.2]{PV_Nagoya21}).

\begin{proposition}\label{prop: density-vanishing}
Let $\varphi$ be a plurisubharmonic function on $\mathbb C^n$ satisfying
\begin{equation}\label{ineq: lower-positivity}
  \sqrt{-1}\partial\overline{\partial}\varphi \geq \varepsilon \omega
\end{equation}
for some constant $\varepsilon>0$. Then every algebraic hypersurface in $\mathbb C^n$ has vanishing upper density with respect to $\varphi$.
\end{proposition}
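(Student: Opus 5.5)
Let $W$ be an algebraic hypersurface in $\mathbb C^n$. The plan is to show that for every $\alpha>0$ the form $\frac{\sqrt{-1}}{2\pi}\partial\overline{\partial}\varphi_r-\frac1\alpha\Upsilon^W_r$ is nonnegative once $r$ is large. The idea is to compare the two terms. The first is bounded below by a fixed multiple of $\omega$, independently of $r$. The second has coefficients of order $1/r^2$, by Proposition \ref{prop: area-growth} with $p=n-1$.

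\textbf{The $\varphi_r$ term.} Convolution with the nonnegative kernel $\mathbf 1_{B(0,r)}/\mathrm{Vol}(B(0,r))$ of total mass one commutes with $\partial\overline\partial$ and preserves inequalities between positive currents. From \eqref{ineq: lower-positivity} we therefore get
$$\sqrt{-1}\partial\overline\partial\varphi_r=(\sqrt{-1}\partial\overline\partial\varphi)\ast\frac{\mathbf 1_{B(0,r)}}{\mathrm{Vol}(B(0,r))}\ge\varepsilon\,\omega$$
for every $r>0$. To be careful one can first regularize $\varphi$. Alternatively, note that $\varphi-\frac{\varepsilon}{2}|z|^2$ is plurisubharmonic and its average is again plurisubharmonic, while $(|\cdot|^2)_r=|z|^2+c_r$ for a constant $c_r$.

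\textbf{The $\Upsilon^W_r$ term.} Write $\Upsilon^W_r=\sum_{j,k}u_{j\bar k}\,\frac{\sqrt{-1}}{2}dz_j\wedge d\bar z_k$, a positive $(1,1)$-form with locally bounded coefficients. A positive $(1,1)$-form is dominated by its trace times $\omega$. The trace is computed by $\Upsilon\wedge\omega^{n-1}=\frac{(n-1)!}{n!}(\mathrm{tr}\,\Upsilon)\,\omega^n$, up to a fixed normalization. Since convolution commutes with wedging against the constant-coefficient form $\omega^{n-1}$, we get
$$\Upsilon^W_r\wedge\omega^{n-1}(z)=\frac{1}{\mathrm{Vol}(B(0,r))}\Big(\int_{B(z,r)}[W]\wedge\omega^{n-1}\Big)\frac{\omega^n}{n!}\cdot c_n,$$
where $c_n$ is a normalization constant. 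Proposition \ref{prop: area-growth} with $p=n-1$ bounds the integral by $\pi^{n-1}(\deg W)\,r^{2n-2}$. Since $\mathrm{Vol}(B(0,r))=\pi^nr^{2n}/n!$, this gives
$$\Upsilon^W_r\le\frac{C_n\deg W}{r^2}\,\omega$$
for all $z$, with $C_n$ depending only on $n$.

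\textbf{Conclusion.} Combining the two bounds,
$$\frac{\sqrt{-1}}{2\pi}\partial\overline\partial\varphi_r-\frac1\alpha\Upsilon^W_r\ge\Big(\frac{\varepsilon}{2\pi}-\frac{C_n\deg W}{\alpha r^2}\Big)\omega,$$
which is nonnegative for all $r\ge r_0(\alpha)$. Hence $D_\varphi(W)\le\alpha$ for every $\alpha>0$, and so $D_\varphi(W)=0$.

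The only delicate points are the normalization bookkeeping in the trace identity and justifying that convolution commutes with $\partial\overline\partial$ and with wedging for currents. Both are routine, because the kernel is a constant-coefficient translation average. The main step is the uniform bound in $z$, and it is exactly what Proposition \ref{prop: area-growth} provides. The weaker binomial bound from remark (ii) would also suffice.
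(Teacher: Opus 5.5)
Your proposal is correct and follows essentially the same route as the paper: dominate the positive form $\Upsilon^W_r$ by its trace times $\omega$, identify that trace as $\frac{n}{\pi^n r^{2n}}\int_{B(z,r)}[W]\wedge\omega^{n-1}$, bound it uniformly in $z$ by a constant times $r^{-2}$ via Proposition~\ref{prop: area-growth} with $p=n-1$, and compare with $\frac{\sqrt{-1}}{2\pi}\partial\overline{\partial}\varphi_r\geq\frac{\varepsilon}{2\pi}\omega$. Your justification of this last lower bound makes explicit a step the paper leaves implicit: plurisubharmonicity of $\varphi-\frac{\varepsilon}{2}|z|^2$ is preserved under ball averages, and $(|\cdot|^2)_r$ differs from $|z|^2$ only by a constant.
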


\begin{proof}
Let $W\subset\mathbb C^n$ be an algebraic hypersurface. Observe that after identifying currents of top degree with distributions, the trace measure of the current $\Upsilon^W_r$ with respect to $\omega$ is
\begin{equation*}\label{eq:trace-formula}
   {\rm tr}_{\omega}\Upsilon^W_r(z)=\frac{n}{\pi^n r^{2n}} \int_{B(z, \,r)} [W]\wedge \omega^{n-1},
   \quad z\in\mathbb C^n.
\end{equation*}
Since  $\Upsilon^W_r$ is positive, Proposition \ref{prop: area-growth} implies
  $$
  \Upsilon^W_r \leq \big({\rm tr}_{\omega}\Upsilon^W_r\big)\omega \leq \frac{{\rm const}_{n,\, W}}{r^2}\,\omega.
  $$
Together with \eqref{ineq: lower-positivity}, this estimate in turn implies that for each $\alpha>0$,
  $$
  \frac{\sqrt{-1}}{2\pi}\partial\overline{\partial}\varphi_r - \frac{1}{\alpha} \Upsilon^W_r
  \geq \bigg(\frac{\varepsilon}{2\pi}-\frac{{\rm const}_{n,\, W}}{\alpha}\frac{1}{r^2}\bigg)\omega>0
  $$
provided $r>{\rm const}_{n, \,W, \,\alpha, \,\varepsilon}$. Hence $D_{\varphi}(W)=0$, as desired.
\end{proof}

The preceding proof does not use the full force of the assumption that $W\subset\mathbb C^n$ is an algebraic hypersurface; we need only assume the following condition:
  $$
  \lim\limits_{r \to \infty}\big\|{\rm tr}_{\omega}\Upsilon^W_r\big\|_{L^{\infty}(\mathbb C^n)}=0.
  $$
A large number of transcendental examples of analytic hypersurfaces in $\mathbb C^n$ with this property can be easily constructed using the following proposition.

\begin{proposition}\label{prop:density-vanishing(trans-case)}
Let $\Gamma_1,\ldots, \Gamma_n$ be discrete sets in $\mathbb C$, and set
  $$
  W\coloneqq \bigcup_{k=1}^n \mathbb C^{k-1}\times\Gamma_k\times\mathbb C^{n-k}.
  $$
Then
  $$
  D_{|\,\cdot\,|^2}(W)\leq n \max_{1\leq k\leq n}D_{|\,\cdot\,|^2}(\Gamma_k).
  $$
\end{proposition}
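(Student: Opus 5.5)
Since $\Upsilon_r$ and $\varphi_r$ are defined by convolution, the plan is to prove the two facts the definition of upper density needs. First, compute $\Upsilon^W_r$ as a current on $\mathbb C^n$, exploiting the product structure of $W$. Second, compare it with $\frac{\sqrt{-1}}{2\pi}\partial\overline{\partial}(|\cdot|^2)_r=\frac{1}{2\pi}\,\sqrt{-1}\partial\overline\partial|\cdot|^2$; convolution with a probability measure does not change $\partial\overline\partial$ of a quadratic form, so this equals $\frac{1}{\pi}\omega$, up to the normalization convention for $\omega$. I will read $D_{|\cdot|^2}(\Gamma_k)$ in the one-variable sense: with $\Upsilon^{\Gamma}_r=\frac{\#(\Gamma\cap D(\cdot,r))}{\pi r^2}$ viewed as a measure, the condition is $\frac{1}{\pi}\cdot\frac{\sqrt{-1}}2 d\zeta\wedge d\bar\zeta-\frac1\alpha\Upsilon^{\Gamma}_r\ge0$. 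Write $\alpha_k> D(\Gamma_k)$, so that for $r\gg1$ the disc averages $\frac{\#(\Gamma_k\cap D(\zeta,r))}{\pi r^2}\le \frac{\alpha_k}{\pi}$ for all $\zeta$, and set $\alpha=\max_k\alpha_k$.

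Since $W$ is a union of the hypersurfaces $W_k=\mathbb C^{k-1}\times\Gamma_k\times\mathbb C^{n-k}$, and these have no common components, $[W]=\sum_k[W_k]$ and hence $\Upsilon^W_r=\sum_k\Upsilon^{W_k}_r$. For each $k$, $[W_k]=\sum_{\gamma\in\Gamma_k}\delta_{\gamma}(z_k)\,\frac{\sqrt{-1}}{2}dz_k\wedge d\bar z_k$, a $(1,1)$-current with only the $(k,k)$-entry nonzero. Convolving with the normalized indicator of the ball $B(0,r)\subset\mathbb C^n$ therefore gives $\Upsilon^{W_k}_r(z)=c_k(z)\,\frac{\sqrt{-1}}2dz_k\wedge d\bar z_k$. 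Here the coefficient $c_k(z)$ is the average over $B(z,r)$ of $\sum_\gamma\delta_\gamma(z_k)$; by Fubini it equals $\frac{1}{\mathrm{Vol}B(0,r)}\sum_{\gamma}\mathrm{Vol}_{2n-2}\{w'\colon (w',\gamma)\in B(z,r)\}$.

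The key step is to bound $c_k(z)$ by one-variable disc averages of $\Gamma_k$ of comparable radius, uniformly in $z$. Slicing the ball in the remaining $n-1$ variables expresses $c_k$ as a weighted average, over radii $s=\sqrt{r^2-|w'-z'|^2}\le r$, of the counts $\#(\Gamma_k\cap D(z_k,s))$. The small radii are the problem, since the density hypothesis only controls large discs. To handle this, I would rewrite the slice formula as an average of disc averages $\frac{\#(\Gamma_k\cap D(\zeta,\rho))}{\pi\rho^2}$ over centres $\zeta$ at a fixed radius $\rho\asymp r$; equivalently, bound the ball indicator pointwise by an average of indicators of cylinders $D(\zeta,\rho)\times B'$. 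This is the step I expect to be the main obstacle. If the sharp constant is not reachable, I would accept a dimensional loss, which is presumably the origin of the factor $n$. The target estimate is $c_k\le \alpha_k/\pi$ (or the same with $\alpha_k$ slightly enlarged) for $r\gg1$.

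Granting this, $\Upsilon^W_r\le \frac{\alpha}{\pi}\sum_k\frac{\sqrt{-1}}2dz_k\wedge d\bar z_k=\frac{\alpha}{\pi}\omega$, as a diagonal form. This gives positivity of $\frac1\pi\omega-\frac{1}{n\alpha}\cdot n\,\Upsilon^{W}_r/n\ldots$; more precisely, it gives $\frac{1}{\pi}\omega-\frac{1}{\beta}\Upsilon^W_r\ge0$ whenever $\beta\ge\alpha$ times the loss constant. Taking the loss constant to be $n$, as in the crude cylinder comparison, yields $D(W)\le n\alpha$, and letting $\alpha_k\downarrow D(\Gamma_k)$ gives the claim.
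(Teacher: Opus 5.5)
Your setup matches the paper's: $[W]=\sum_k[W_k]$, $\Upsilon^{W_k}_r=c_k\,\frac{\sqrt{-1}}{2}{\rm d}z_k\wedge{\rm d}\bar z_k$ with $c_k$ given by your Fubini formula, and the whole proposition hinges on a uniform bound for $c_k$. The gap is that you never prove that bound. You call it ``the main obstacle'' and end by asserting that a ``crude cylinder comparison'' gives a loss constant $n$. That number is read off from the statement (``presumably the origin of the factor $n$''), not derived.

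The missing step is one line. The slice $\{w'\colon (w',\gamma)\in B(z,r)\}$ is empty unless $|\gamma-z_k|<r$. Otherwise it is a ball in $\mathbb C^{n-1}$ of radius $\sqrt{r^2-|\gamma-z_k|^2}\le r$. Equivalently, $B(z,r)\subset\Delta(z_k,r)\times B'(z',r)$, where $B'$ is a ball in $\mathbb C^{n-1}$. Hence
\[
c_k(z)=\frac{n}{\pi r^{2n}}\sum_{\gamma\in\Gamma_k\cap\Delta(z_k,r)}\big(r^2-|\gamma-z_k|^2\big)^{n-1}\le\frac{n}{\pi}\,\frac{\sharp\big(\Gamma_k\cap\Delta(z_k,r)\big)}{r^2}\le\frac{n\alpha_k}{\pi}\qquad(r\gg1),
\]
where the last step is your density hypothesis. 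Only the disc of full radius $r$ enters, so the small-radius problem never arises. This is exactly the paper's proof. It gives $\Upsilon^W_r\le\frac{n\alpha}{\pi}\,\omega$, hence $D_{|\,\cdot\,|^2}(W)\le n\alpha$.

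Your sharper target, without the factor $n$, is also reachable; small radii are harmless there too. For $n\ge2$ put $N(s)=\sharp(\Gamma_k\cap\Delta(z_k,s))$. Then
\[
\sum_{\gamma\in\Gamma_k\cap\Delta(z_k,r)}\big(r^2-|\gamma-z_k|^2\big)^{n-1}=\int_0^r N(s)\,2(n-1)s\,(r^2-s^2)^{n-2}\,{\rm d}s .
\]
Fix $R$ with $\sup_\zeta\sharp(\Gamma_k\cap\Delta(\zeta,s))\le\alpha_k s^2$ for all $s\ge R$. For $s<R$ use monotonicity: $N(s)\le N(R)\le\alpha_kR^2$. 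The weight has total mass $r^{2n-2}$, and $\int_0^r 2(n-1)s^3(r^2-s^2)^{n-2}\,{\rm d}s=r^{2n}/n$. This yields, uniformly in $z$,
\[
c_k(z)\le\frac{\alpha_k}{\pi}\Big(1+\frac{nR^2}{r^2}\Big).
\]
So in fact $D_{|\,\cdot\,|^2}(W)\le\max_k D_{|\,\cdot\,|^2}(\Gamma_k)$. The factor $n$ comes only from replacing $(r^2-|\gamma-z_k|^2)^{n-1}$ by $r^{2n-2}$, which for an evenly spread $\Gamma_k$ loses exactly a factor $n$. As submitted, however, your proposal establishes neither version.
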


\begin{proof}
First note that  the $(1,\, 1)$-currents $\{\Upsilon^{W_k}_r\}_{r>0}$ associated to each $W_k\coloneqq \mathbb C^{k-1}\times\Gamma_k\times\mathbb C^{n-k}$ are equal to
  \begin{equation}\label{eq: transcendental-density}
    \Upsilon^{W_k}_r(z)=\frac{\big(\mathrm{Vol}_{k-1}\times\delta_{\Gamma_k}\times\mathrm{Vol}_{n-k}\big)(B(z, \,r))}{\mathrm{Vol}_n(B(0, \,r))}
                      \frac{\sqrt{-1}}{2} {\rm d}z_k\wedge{\rm d}\bar{z}_k, \quad  r>0,
  \end{equation}
where $\delta_{\Gamma_k}\coloneqq\sum_{\zeta\in\Gamma_k}\delta_{\zeta}$ denotes the counting measure on $\Gamma_k$  and $\mathrm{Vol}_l$ the Lebesgue measure on $\mathbb C^l$ for each integer $l\geq 1$.

Writing \eqref{eq: transcendental-density} in a more explicit way we get\footnote{For a finite set $E$, $\sharp E$ denotes the number of its elements.}
  \begin{equation*}
  \begin{split}
    \Upsilon^{W_k}_r(z)
      &=\frac{n!}{(\pi r^2)^n}\sum_{\zeta\in\Gamma_k\cap\Delta(z_k,\, r)}\frac{\pi^{n-1}}{(n-1)!}(r^2-|\zeta-z_k|^2)^{n-1}
                         \frac{\sqrt{-1}}{2} {\rm d}z_k\wedge{\rm d}\bar{z}_k \\
      &\leq\frac{n}{\pi}\frac{\sharp\big(\Gamma_k\cap\Delta(z_k,\, r)\big)}{r^2}\frac{\sqrt{-1}}{2} {\rm d}z_k\wedge{\rm d}\bar{z}_k
  \end{split}
  \end{equation*}
for all $1\leq k\leq n$ and $r>0$, hence
  $$
  \Upsilon^{W}_r(z)=\sum_{k=1}^{n}\Upsilon^{W_k}_r(z)
  \leq \frac{n}{\pi} \sum_{k=1}^{n}\frac{\sharp\big(\Gamma_k\cap\Delta(z_k,\, r)\big)}{r^2}\frac{\sqrt{-1}}{2} {\rm d}z_k\wedge{\rm d}\bar{z}_k.
  $$
This in turn implies
  $$
  \frac{\sqrt{-1}}{2\pi}\partial\overline{\partial}|z|^2-\frac{1}{\alpha} \Upsilon^W_r(z)
  \geq \frac{1}{\pi} \sum_{k=1}^{n}\bigg(1-\frac{n}{\alpha}\frac{\sharp\big(\Gamma_k\cap\Delta(z_k,\, r)\big)}{r^2}\bigg)\frac{\sqrt{-1}}{2} {\rm d}z_k\wedge{\rm d}\bar{z}_k
  $$
for all $\alpha\in (0,\, \infty]$. Since $|\cdot|^2_r=|\cdot|^2+\frac{n}{n+1}r^2$ for $r>0$, the desired result now follows immediately from the definition of $D_{|\,\cdot\,|^2}(W)$ and the following formula:
  $$
  D_{|\,\cdot\,|^2}(\Gamma_k)=\limsup\limits_{r\to \infty}\sup_{\zeta\in\mathbb C}\frac{\sharp\big(\Gamma_k\cap\Delta(\zeta,\, r)\big)}{r^2}, \qquad k=1,\ldots,n.
  $$
\end{proof}

As an additional consequence of the preceding proposition and \cite[Theorem 1]{PV_Crelle16} we obtain the following
\begin{corollary}
Let $\Gamma_1,\ldots, \Gamma_n$ be uniformly separated discrete sets in $\mathbb C$ satisfying
  $$
  D_{|\,\cdot\,|^2}(\Gamma_k)<\frac 1n, \quad k=1,\ldots, n.
  $$
Then the analytic hypersurface
  $$
  W\coloneqq \bigcup_{k=1}^n \mathbb C^{k-1}\times\Gamma_k\times\mathbb C^{n-k}
  $$
is interpolating for the Bargmann--Fock space on $\mathbb C^n$.
\end{corollary}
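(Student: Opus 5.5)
The corollary will follow from Proposition \ref{prop:density-vanishing(trans-case)} together with \cite[Theorem 1]{PV_Crelle16}. That theorem says a smooth uniformly flat hypersurface $W\subset\mathbb C^n$ with $D_{|\cdot|^2}(W)<1$ is interpolating for the Bargmann--Fock space. So two properties of $W$ need checking: the density bound, and that $W$ is a smooth, uniformly flat hypersurface.

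The density bound is immediate. By Proposition \ref{prop:density-vanishing(trans-case)},
$$
D_{|\,\cdot\,|^2}(W)\leq n\max_{1\leq k\leq n}D_{|\,\cdot\,|^2}(\Gamma_k)<n\cdot\frac1n=1 .
$$

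Geometry is the real step, and I expect it to be the main obstacle. When $n\geq2$, two different pieces $W_j$ and $W_k$ intersect, along $\{z_j\in\Gamma_j,\, z_k\in\Gamma_k\}$. Hence $W$ is singular, a normal-crossing divisor, and the definition of uniform flatness given in Section \ref{sect: Uniform-flatness} (for smooth hypersurfaces) does not apply literally. I would handle this in two parts.

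\textbf{Each piece.} Each $W_k$ is uniformly flat, because its defining data depend only on $z_k$. The tube map is $(z,t)\mapsto z+te_k$ with $z_k\in\Gamma_k$. It is a diffeomorphism from $W_k\times\Delta(0,\varepsilon)$ onto $N_\varepsilon(W_k)$ once $2\varepsilon$ is below the separation constant of $\Gamma_k$. Near each point of $W$, then, $W$ is a union of at most $n$ coordinate hyperplanes meeting transversally and orthogonally, with uniform control.

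\textbf{Applying \cite[Theorem 1]{PV_Crelle16}.} I would appeal to the form of that theorem which allows singular $W$ given as $T^{-1}(0)$, with uniform flatness phrased through a defining function $T$ satisfying uniform bounds. Take
$$
T(z)=\prod_{k=1}^n T_k(z_k),
$$
where $T_k$ is an entire function on $\mathbb C$ vanishing exactly on $\Gamma_k$. Its standard properties for separated sequences of finite density give $|T_k|e^{-|z_k|^2/2}\approx \mathrm{dist}(z_k,\Gamma_k)$. If the cited theorem genuinely requires smoothness, the fallback is to run its $\bar\partial$ proof iteratively, one coordinate at a time. Since the weight $|z|^2=\sum_k|z_k|^2$ splits, interpolating across each $W_k$ reduces to the one-variable Seip--Wallst\'en problem for $\Gamma_k$. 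Combining the $n$ pieces by inclusion--exclusion on the intersections loses at most the factor $n$ already built into the density hypothesis.
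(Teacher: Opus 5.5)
Your route is the paper's. The paper gets $D_{|\cdot|^2}(W)\le n\max_k D_{|\cdot|^2}(\Gamma_k)<1$ from Proposition~\ref{prop:density-vanishing(trans-case)}, asserts that $W$ is uniformly flat in the sense of Pingali--Varolin \cite[Definition 2.4]{PV_Crelle16}, and then invokes \cite[Theorem 1]{PV_Crelle16}. That theorem is formulated for possibly singular hypersurfaces $W=T^{-1}(0)$, and the paper applies it directly to this normal-crossing $W$. So your fallback is unnecessary. As sketched, its claim that inclusion--exclusion ``loses at most the factor~$n$'' would in any case need a real argument.

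However, the one concrete estimate you give for the defining function is false. Under your hypotheses, no entire $T_k$ with zero set $\Gamma_k$ satisfies $|T_k|e^{-|z_k|^2/2}\approx\mathrm{dist}(z_k,\Gamma_k)$; the lower bound already fails.

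\emph{Why it fails.} Assume, as one may, that $0\notin\Gamma_k$, and let $n_k(t)=\sharp(\Gamma_k\cap\Delta(0,t))$. By Jensen's formula, the mean of $\log|T_k|-\tfrac12|z_k|^2$ over $|z_k|=R$ equals $\log|T_k(0)|+\int_0^R\frac{n_k(t)}{t}\,dt-\frac{R^2}{2}$. Since $D_{|\cdot|^2}(\Gamma_k)<1$, we have $n_k(t)\le D't^2$ for large $t$ with some $D'<1$, so this mean tends to $-\infty$. On the other hand, separation keeps the circle means of $\log\min\{\mathrm{dist}(\cdot,\Gamma_k),1\}$ bounded below, which is a contradiction. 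The Gaussian is the wrong comparison weight here; you need a weight adapted to $\Gamma_k$.

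\emph{The fix.} Let $\psi_k(z_k)$ be the average of $\log|T_k|$ over $\Delta(z_k,1)$. Then $\psi_k$ is subharmonic with bounded Laplacian, and a direct computation gives
$$\log|T_k(z_k)|-\psi_k(z_k)=\sum_{\gamma\in\Gamma_k,\,|\gamma-z_k|<1}\big(\log|z_k-\gamma|+\tfrac12(1-|z_k-\gamma|^2)\big).$$
Each summand is $\le 0$, and by separation only boundedly many summands occur. Hence $|T_k|e^{-\psi_k}\asymp\min\{\mathrm{dist}(z_k,\Gamma_k),1\}$ uniformly. Consequently $|T|e^{-\sum_k\psi_k}\asymp\prod_k\min\{\mathrm{dist}(z_k,\Gamma_k),1\}$, and $\sum_k\psi_k$ has bounded Levi form. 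This is the correct replacement for your estimate, and it is the natural input for checking Pingali--Varolin's uniform flatness condition for this product-type $W$.
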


\begin{proof}
Since each $\Gamma_k$ is uniformly separated, it is easy to see that $W$ is uniformly flat in the sense of Pingali--Varolin (see \cite[Definition 2.4]{PV_Crelle16}). Moreover, $D_{|\,\cdot\,|^2}(W)<1$ by Proposition \ref{prop:density-vanishing(trans-case)}. Then \cite[Theorem 1]{PV_Crelle16} applies.
\end{proof}

\end{document}